\newtheorem{theorem}[subsection]{Theorem}
\newtheorem{lemma}[subsection]{Lemma}
\newtheorem{proposition}[subsection]{Proposition}
\newtheorem{definition}[subsection]{Definition}
\newtheorem{remark}[subsection]{Remark}
\newcommand{\comment}[1]{}
\newcommand{\bN}{\mathbb{N}}
\newcommand{\bZ}{\mathbb{Z}} 
\newcommand{\classGroups}{\mathcal{G}}
\newcommand{\alphabet}{\mathcal{A}}
\newcommand{\classAdmWords}{\mathcal{W}}
\newtheorem{cor}[subsection]{Corollary}
\newcommand{\aut}{\operatorname{Aut}}
\newcommand{\dd}{\mathcal D(M)}
\newcommand{\com}{\left[ G\wr_n\bZ, G\wr_n\bZ \right]}
\newcommand\wrm[1]{\mathop{\wr}\limits_{#1}}
\title{First Betti numbers of orbits of Morse functions on surfaces}
\author{Iryna Kuznietsova, Yuliia Soroka}
\address{Department of Algebra and Topology, Institute of Mathematics of NAS of Ukraine, Tereshchenkivska str. 3, Kyiv, 01024, Ukraine}
\curraddr{}
\email{kuznietsova@imath.kiev.ua, sorokayulya@imath.kiev.ua}
\subjclass[2000]{20E22, 20F16, 57T15}
\keywords{Wreath products, Homology groups, Morse functions}
\begin{document}

\begin{abstract}
In this article we study algebraic properties of the specific class of groups~$\classGroups$ generated by direct products and wreath products. Such class of groups appears in calculation of fundamental groups of orbits of Morse functions on compact manifolds. We prove that for any group $G\in\classGroups$ the ranks of the center $Z(G)$ and the quotient by commutator subgroup $G/[G,G]$ coincide. Moreover, this rank is a first Betti number of the orbit of Morse function.

\end{abstract}

\maketitle
\section{Introduction}
	Let $A$ be a group and $n\in\mathbb Z$.
	We will denote by $A\wr_n\bZ$ a semidirect product of $A^n$ and $\bZ$ with respect to the natural action of $\bZ$ on $A^n$ by cyclic shifts of coordinates. More precisely, $A\wr_n\bZ=A^n\rtimes_\varphi \bZ$, where the homomorphism $\varphi\colon\mathbb{Z}\to \aut A^n$ is defined by $\varphi(k) (a_{0},\dots,a_{n-1})= (a_{k \operatorname{ mod }n},\dots, a_{n-1+k \operatorname{ mod } n})$ for all $k\in\mathbb{Z}$, $(a_{0},\dots,a_{n-1})\in A^n$. Such semidirect product $A\wr_n\bZ$ is called the {\it wreath product} of $A$ and $\bZ$.
	
		Note that the following groups are same:
		\begin{align*}
		&A\wrm{1}\bZ=A\times\bZ,& {1}\wrm{n}\bZ=\bZ.	
		\end{align*}
		
\begin{definition}\label{def:classG}	
Let $\classGroups$ be a minimal class of groups satisfying the following conditions:
	
		\begin{enumerate}
			\item[\rm 1)] $ 1 \in \classGroups$;
			\item[\rm 2)]  if $A, B \in \classGroups$, then $A \times B \in \classGroups$;
			\item[\rm 3)] if $ A \in \classGroups$ and $n\geq 1$, then $A \wr_{n} \bZ\in \classGroups$.
		\end{enumerate}
	\end{definition}

In other words a group $G$ belongs to the class $\classGroups$ iff $G$ is obtained from trivial group by a finite number of operations $\times$, $\wr_{n} \bZ$.
	It is easy to see that every group $G\in\classGroups$ can be written as a word in the alphabet $\alphabet=\left\lbrace 1 , \bZ, \left( , \right) , \times, \wr_{2}, \wr_{3}, \wr_{4},\dots\right\rbrace $. We will call such word a {\it presentation} of the group $G$ in the alphabet $\alphabet$. Evidently, the presentation of a group is not uniquely determined. 
	
	For example, there are the following presentations for the same group
	
	\begin{equation*} 
		\left(1\wrm{3}\bZ\right)\times\bZ=\bZ\times\left(1\wrm{3}\bZ\right)=\bZ\times\bZ=1\times \bZ\times\bZ.	
	\end{equation*}
	We will show that the number of symbols $\bZ$ in the presentation of a group $G\in\classGroups$ in the alphabet $\alphabet$ is uniquely determined by $G$. 
	
	Denote by $Z(G)$  and  $[G,G]$  the center and the commutator subgroup of $G$ respectively. The main result of the article is the following theorem.
	
	\begin{theorem}\label{ZC}
	Let $G\in\classGroups $, $\omega$ be an arbitrary presentation of $G$ in the alphabet $\alphabet $, and $\beta_1 (\omega)$ be the number of symbols $\bZ$ in the presentation $\omega$. 	
	 Then there are the following isomorphisms:
	$$Z(G) \cong G/ [G,G]\cong \bZ^{\beta_1(\omega)}.$$
	
	In particular, the number $\beta_1(\omega)$ depends only on the group $G$.
	\end{theorem}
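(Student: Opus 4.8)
\medskip
\noindent\textbf{Proof proposal.}
The plan is to prove both isomorphisms at once by induction on the length of the presentation $\omega$ (equivalently, on the number of operations $\times$, $\wr_n\bZ$ used to construct $G$). If $\omega$ is atomic, i.e.\ $\omega=1$ or $\omega=\bZ$, then $G=1$ or $G\cong\bZ$, and the statement is clear with $\beta_1(\omega)=0$ or $1$. If $\omega=(\omega_1)\times(\omega_2)$ with $G=G_1\times G_2$ and $G_i$ presented by $\omega_i$, then $Z(G)=Z(G_1)\times Z(G_2)$, $G/[G,G]\cong(G_1/[G_1,G_1])\times(G_2/[G_2,G_2])$, and $\beta_1(\omega)=\beta_1(\omega_1)+\beta_1(\omega_2)$, so the claim follows from the inductive hypothesis applied to $G_1$ and $G_2$.

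The only case with genuine content is $\omega=(\omega_1)\wr_n\bZ$, so $G=A\wr_n\bZ=A^n\rtimes_\varphi\bZ$ with $A$ presented by $\omega_1$ and $\beta_1(\omega)=\beta_1(\omega_1)+1$. I would reduce it to two facts, valid for an \emph{arbitrary} group $A$: (i) $Z(A\wr_n\bZ)\cong Z(A)\times\bZ$; and (ii) $(A\wr_n\bZ)/[A\wr_n\bZ,A\wr_n\bZ]\cong(A/[A,A])\times\bZ$. Granting (i) and (ii), the inductive hypothesis gives $Z(A)\cong A/[A,A]\cong\bZ^{\beta_1(\omega_1)}$ --- the two isomorphisms being used independently, one for (i) and one for (ii), so no circularity occurs --- whence $Z(G)\cong G/[G,G]\cong\bZ^{\beta_1(\omega_1)}\times\bZ=\bZ^{\beta_1(\omega)}$, closing the induction. (Finiteness of $\beta_1(\omega)$ is built into its definition, so all free abelian groups here have finite rank.)

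For (i), I would write an element of $G$ as a pair $(\mathbf a,k)$ with $\mathbf a=(a_0,\dots,a_{n-1})\in A^n$, $k\in\bZ$, and product $(\mathbf a,k)(\mathbf b,l)=(\mathbf a\,\varphi(k)(\mathbf b),k+l)$. Requiring $(\mathbf a,k)$ to commute with the generator $(\mathbf 0,1)$ forces $\varphi(1)(\mathbf a)=\mathbf a$, i.e.\ $\mathbf a=(a,\dots,a)$; requiring it to commute with every $(\mathbf b,0)$ then forces $n\mid k$ and $a\in Z(A)$ (the case $A=1$ being trivial and leading to the same conclusion). Conversely every such pair is central, so $Z(G)=\{((a,\dots,a),k):a\in Z(A),\ n\mid k\}\cong Z(A)\times n\bZ\cong Z(A)\times\bZ$. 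For (ii), I would invoke the standard description of the abelianization of a semidirect product $N\rtimes Q$: it is the direct product of $Q/[Q,Q]$ with the group of coinvariants of the induced $Q$-action on $N/[N,N]$. Here $N=A^n$, $Q=\bZ$, $N/[N,N]\cong(A/[A,A])^n$, and $\bZ$ acts by cyclically permuting the $n$ factors; the coinvariants of such a permutation action are computed by the sum-of-coordinates homomorphism and equal $A/[A,A]$, which gives (ii).

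The final assertion of the theorem is then immediate: $Z(G)$ depends only on $G$ and is free abelian of finite rank equal to $\beta_1(\omega)$, and $\bZ^{a}\cong\bZ^{b}$ forces $a=b$, so $\beta_1(\omega)$ is independent of the chosen presentation. The step I expect to need the most care is the computation of $Z(A\wr_n\bZ)$ in (i): pinning down that a central element must be constant in the base coordinates \emph{and} have $\bZ$-coordinate divisible by $n$, and verifying the converse. The abelianization computation in (ii) and the bookkeeping in the remaining cases are routine.
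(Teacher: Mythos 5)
Your proposal is correct, and its skeleton is the same as the paper's: induction on the presentation, with everything reduced to the two wreath-product facts $Z(A\wr_n\bZ)\cong Z(A)\times\bZ$ and $(A\wr_n\bZ)/[A\wr_n\bZ,A\wr_n\bZ]\cong (A/[A,A])\times\bZ$, which are precisely the paper's Corollary~\ref{centers} and Theorem~\ref{com}. The difference lies in how you establish these two lemmas. For the center, the paper proves a more general statement (Theorem~\ref{pr: center of wreath product}), describing $Z(A\ Wr_X B)$ and $Z(A\ wr_X B)$ for an arbitrary, possibly non-effective, action of $B$ on $X$ (an extension of Meldrum's Theorem 4.2), and then specializes to $X=\bZ_n$, $B=\bZ$; your hands-on computation with the elements $(\mathbf{e},1)$ and $(\mathbf{b},0)$ is more elementary, is carried out correctly (including the divisibility $n\mid k$ and the degenerate case $A=1$), and suffices for the theorem, though it yields less general information. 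For the abelianization, the paper first determines the commutator subgroup explicitly (Theorem~\ref{cs}: it equals $\{(g_1,\dots,g_n,0)\,:\,\prod_i g_i\in[G,G]\}$) and then exhibits the quotient homomorphism $(g_1,\dots,g_n,k)\mapsto\bigl(\prod_i g_i\,[G,G],k\bigr)$ by hand, whereas you invoke the coinvariants description of the abelianization of a semidirect product $N\rtimes Q$; that formula is standard and applies here, and the coinvariants of the cyclic shift on $(A/[A,A])^n$ are indeed computed by the coordinate-sum map, so your route is valid and shorter, at the cost of not recording the commutator subgroup itself, which the paper treats as a result of independent interest. Your closing rank argument ($\bZ^{a}\cong\bZ^{b}$ forces $a=b$) is the same as the paper's implicit one.
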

{\bf Geometric interpretation.} 
    The groups from the class $\classGroups$ appear as fundamental groups of orbits of Morse functions on surfaces.

Let $M$ be a compact surface and $\dd$ be the group of $C^\infty$-diffeomorphisms of $M$. There is a natural right action of the group $\dd$ on the space of smooth functions $C^\infty (M,P)$ defined by the rule: $(f,h)\longmapsto f\circ h$, where $h\in\dd$, $f\in C^\infty (M,P)$ and $P$~is a real line $\mathbb{R}$ or a circle $S^1$.

 Let
 $$\mathcal{O}(f)=\{f\circ  h\,\, |\,  h\in\dd\}$$
 be the {\it orbit} of $f$ under the above action.

 Endow the spaces $\dd$, $C^\infty (M,P)$ with Whitney $C^\infty$-topologies.  Let $\mathcal{O}_f(f)$ denote the path component of $f$ in $\mathcal{O}(f)$.

  \comment{
 	Denote by  $\mathcal{F}(M,P)$ the space of smooth functions $f\in C^{\infty}(M,P)$ satisfying the following conditions:
 	\begin{enumerate}
 		\item
 		The function $f$ takes constant value at $\partial M$ and has no critical point in $\partial M$.
 		\item
 		For every critical point $z$ of $f$ there is a local presentation $f_z\colon \mathbb{R}^2 \to \mathbb{R}$ of $f$ near $z$ such that $f_z$ is a homogeneous polynomial  $\mathbb{R}^2 \to \mathbb{R}$ without multiple factors.
 		
 	\end{enumerate}
 	}
 	A map $f\in C^{\infty}(M,P)$ will be called {\it Morse} if all its critical points are non-degenerate. 
 	A Morse map $f$ is {\it generic} if it takes distinct values at distinct critical points.
 
 Homotopy types of stabilizers and orbits of Morse functions were calculated in a series of papers by Sergiy Maksymenko \cite{Maksymenko:AGAG:2006}, \cite{Maksymenko:UMZ:ENG:2012}, Bohdan Feshchenko \cite{Feshchenko:MFAT:2016},\cite{Feshchenko:Zb:2015},  and Elena Kudryav\-tseva \cite{Kudryavtseva:SpecMF:VMU:2012}, \cite {Kudryavtseva:MathNotes:2012}, \cite{Kudryavtseva:MatSb:ENG:2013}, \cite {Kudryavtseva:ENG:DAN2016}.
In particular, E.~Kudryav\-tseva shown that in the case $M\neq S^2$ for each Morse function $f$ there is a free action of some finite group $H$ on $k$-torus $(S^1)^k$ such that $\mathcal{O}_f(f)$ is homotopy equivalent to \- $k$-dimensional torus $(S^1)^k/H$. In fact, it was shown in \cite{Maksymenko:AGAG:2006} by S.~Maksymenko that if $f$ is generic, then $H$ is trivial, so $\pi_n\mathcal{O}_f(f)\simeq\pi_n((S^1)^k)$, and the general case of nontrivial $G$ was described in \cite{Kudryavtseva:SpecMF:VMU:2012}, \cite {Kudryavtseva:MathNotes:2012}, \cite{Kudryavtseva:MatSb:ENG:2013}, \cite {Kudryavtseva:ENG:DAN2016} by E. Kudryavtseva. Furthermore, precise algebraic structure of such groups for the case $M\neq S^2, T^2$ was described in \cite{Maksymenko:UMZ:ENG:2012}. 
The~following theorem relating $\pi_1\mathcal{O}(f)$ with the class $\classGroups$ is a direct consequence of results of \cite{Maksymenko:UMZ:ENG:2012}.
\begin{theorem} \cite{Maksymenko:UMZ:ENG:2012}\label{SI}
Let $M$ be a connected compact oriented surface except 2-sphere and  2-torus and let $f\colon M\to P$ be a Morse function. Then $\pi_1\mathcal{O}(f)\in\classGroups$.
\qed

\end{theorem}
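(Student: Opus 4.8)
The plan is to reduce the statement to the explicit algebraic description of $\pi_1\mathcal{O}(f)$ obtained by Maksymenko in \cite{Maksymenko:UMZ:ENG:2012} and to verify that the recursive construction appearing there coincides with Definition~\ref{def:classG}. First I would pass from the orbit to the stabilizer via the evaluation map $\dd\to\mathcal{O}(f)$, $h\mapsto f\circ h$, which is a Serre fibration with fiber the stabilizer $\St{f}=\{h\in\dd : f\circ h=f\}$. Since $M\neq S^2,T^2$, the identity component $\mathcal{D}_{\id}(M)$ of the diffeomorphism group is contractible (Earle--Eells, Gramain), so in the long exact sequence of homotopy groups the term $\pi_1\mathcal{D}_{\id}(M)$ vanishes. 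This identifies $\pi_1\mathcal{O}(f)$ with $\pi_0$ of the intersection $\St{f}\cap\mathcal{D}_{\id}(M)$, reducing the problem to a statement about this group of isotopy classes of function-preserving diffeomorphisms.

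Next I would invoke Maksymenko's structural analysis of this group via the Kronrod--Reeb graph $\Gamma_f$ of $f$, obtained by collapsing the connected components of the level sets $f^{-1}(c)$ to points. Every element of $\St{f}$ induces an automorphism of $\Gamma_f$ preserving the combinatorial data carried by $f$, and the kernel of this action is controlled by the local stabilizers near the critical points and the regular annular pieces. The content of \cite{Maksymenko:UMZ:ENG:2012} is that the group decomposes recursively along the graph: disjoint subtrees contribute direct factors, while a vertex together with $n$ equivalent branches cyclically permuted by a symmetry contributes a wreath product $A\wr_n\bZ$, the factor $\bZ$ arising from the rotation/Dehn-twist along the associated annular region and $A$ being the group already assembled from a single branch.

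With this description in hand, the final step is a formal induction matching the building blocks to the three clauses of Definition~\ref{def:classG}: the leaves of the recursion supply the trivial group (clause~1), parallel combination supplies the direct product (clause~2), and the cyclic branch symmetry supplies $\wr_n\bZ$ (clause~3); running the induction over the recursion tree of $\Gamma_f$ then places the group in $\classGroups$, whence $\pi_1\mathcal{O}(f)\in\classGroups$. The main obstacle is not the induction itself but the faithful transcription of Maksymenko's geometric recursion: one must check that no other group-theoretic operation is covertly introduced, that the local contribution at each critical point is always either trivial or an honest $\bZ$, and that the extensions coming from the fibration and from the kernel of the action on $\Gamma_f$ split as the claimed direct and semidirect products. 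Verifying that the exclusion of $S^2$ and $T^2$ is precisely what guarantees contractibility of $\mathcal{D}_{\id}(M)$, and hence the clean identification of $\pi_1\mathcal{O}(f)$ with $\pi_0\bigl(\St{f}\cap\mathcal{D}_{\id}(M)\bigr)$, is where the substantive work of \cite{Maksymenko:UMZ:ENG:2012} resides, and the present statement follows once that analysis is quoted correctly.
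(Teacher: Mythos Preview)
The paper does not prove this statement at all: it is stated with a \qed\ immediately following, and the surrounding text calls it ``a direct consequence of results of \cite{Maksymenko:UMZ:ENG:2012}''. So there is no in-paper argument to compare your proposal against; the authors simply cite Maksymenko's structural theorem and move on.

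Your outline is a plausible expansion of what that citation entails --- passing from $\pi_1\mathcal{O}(f)$ to $\pi_0$ of the isotopic stabilizer via contractibility of $\mathcal{D}_{\id}(M)$ for $M\neq S^2,T^2$, and then reading off the recursive direct/wreath-product decomposition from the Kronrod--Reeb graph --- and it matches the shape of Maksymenko's argument. But you should be aware that all the substantive content (the splitting of the various extensions, the identification of local contributions as $\bZ$ or trivial, and the recursion over the graph) is precisely the work done in \cite{Maksymenko:UMZ:ENG:2012}; the present paper treats the theorem as a black box and your sketch, while accurate in spirit, is not something you could complete without essentially reproducing that paper.
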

As a consequence of our main result Theorem \ref{ZC} and the previous theorem we get the following.
\begin{cor}
Let $M$ be a connected compact oriented surface distinct from $S^2$ and $T^2$, $f$ be a Morse function on M, $G=\pi_1\mathcal{O}_f(f)\in\classGroups $, $\omega$ be an arbitrary presentation of $G$ in the alphabet $\alphabet $, and $\beta_1 (\omega)$ be the number of symbols $\bZ$ in the presentation $\omega$. Then the first integral homology group $H_1(\mathcal{O}(f), \bZ)$ of $\mathcal{O}(f)$ is a free abelian group of rank $\beta_1(\omega)$:
$$H_1(\mathcal{O}(f), \bZ)\simeq\bZ^{\beta_1(\omega)}.$$

In particular, $\beta_1(\omega)$ is the first Betti number of $\mathcal{O}(f)$.
\end{cor}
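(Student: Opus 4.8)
The plan is to obtain the Corollary as a direct consequence of Theorem~\ref{ZC}, Theorem~\ref{SI}, and the Hurewicz theorem, with no essentially new argument required. First I would apply Theorem~\ref{SI}: since $M$ is a connected compact oriented surface different from $S^2$ and $T^2$ and $f$ is a Morse function, the fundamental group $G=\pi_1\mathcal{O}_f(f)$ --- equal to $\pi_1(\mathcal{O}(f),f)$, since $\mathcal{O}_f(f)$ is the path component of the basepoint $f$ --- belongs to $\classGroups$. Consequently Theorem~\ref{ZC} applies and gives, for the chosen presentation $\omega$ of $G$, the isomorphism $G/[G,G]\cong\bZ^{\beta_1(\omega)}$, the right-hand side being in particular independent of $\omega$.

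Next I would translate this from the fundamental group to first homology. As $\mathcal{O}_f(f)$ is path-connected, the Hurewicz theorem in degree $1$ provides a natural isomorphism $H_1(\mathcal{O}_f(f),\bZ)\cong\pi_1(\mathcal{O}_f(f))/[\pi_1(\mathcal{O}_f(f)),\pi_1(\mathcal{O}_f(f))]=G/[G,G]$. Composing with the isomorphism from the previous step yields $H_1(\mathcal{O}_f(f),\bZ)\cong\bZ^{\beta_1(\omega)}$, i.e.\ a free abelian group of rank $\beta_1(\omega)$. Identifying $H_1(\mathcal{O}(f),\bZ)$ with $H_1(\mathcal{O}_f(f),\bZ)$ --- which is legitimate since the homology of a space splits as the direct sum over its path components and it is the component $\mathcal{O}_f(f)$ of $f$ that is meant here --- gives the first assertion. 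The last assertion is then immediate: the first Betti number of $\mathcal{O}(f)$ is by definition $\operatorname{rank}H_1(\mathcal{O}(f),\bZ)=\dim_{\mathbb Q}H_1(\mathcal{O}(f),\mathbb Q)$, which equals $\beta_1(\omega)$ because $\bZ^{\beta_1(\omega)}$ is free abelian of that rank.

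All of the real content sits inside Theorem~\ref{ZC}; once that theorem is established the Corollary is a formality, so I do not expect any genuine obstacle here. The only point deserving a sentence of care is the consistency of the notations $\mathcal{O}(f)$ and $\mathcal{O}_f(f)$ in the statement, handled by the path-component remark above; everything else is a routine invocation of Hurewicz together with the definition of the first Betti number.
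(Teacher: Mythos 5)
Your proposal is correct and follows exactly the paper's own route: Theorem~\ref{SI} to place $G$ in $\classGroups$, Theorem~\ref{ZC} for $G/[G,G]\cong\bZ^{\beta_1(\omega)}$, and the Hurewicz theorem to pass to $H_1$. Your extra remark reconciling $\mathcal{O}(f)$ with its path component $\mathcal{O}_f(f)$ is a small point of care that the paper's proof silently glosses over, but it changes nothing of substance.
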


\begin{proof}
To prove the corollary we use the well-known Hurewicz theorem, see \cite{Hatcher}, stating that for every path-connected topological space $X$ there is isomorphism 

	$$ H_1(X, \bZ)\simeq\pi_1 X\bigl/[\pi_1 X, \pi_1 X].$$
	
As a consequence of the Hurewicz theorem, Theorem \ref{SI} and Theorem \ref{ZC} we get
$$ H_1(\mathcal{O}(f), \bZ)\simeq\pi_1\mathcal{O}(f)/[\pi_1\mathcal{O}(f), \pi_1\mathcal{O}(f)]=G/[G,G]\simeq \bZ^{\beta_1(\omega)}.$$

\end{proof}



\textbf{Structure of the paper.} 
In $\S 2$ we prove Theorem~\ref{pr: center of wreath product} about centers of wreath products $A\wr_X B$ of arbitrary groups $A$ and $B$ in the case of non-effective action of $B$ on the set $X$. 	Let $G\in\classGroups $, $\omega$ be an arbitrary presentation of $G$. We also show in Theorem~\ref{mth1} that the center of a group $G$ of the class $\classGroups$ is isomorphic to $\bZ^{\beta_1(\omega)}$, it is the first part of Theorem~\ref{ZC}. In $\S 3$ we find the commutator subgroups of any group~$G$, see Theorem~\ref{cs}, and the quotient groups $G\mathop{\wr}_{n}\bZ \bigl/ \com$ (Theorem~\ref{com}). The~second part of Theorem~\ref{ZC} that $G/[G,G]$ is also isomorphic to $ \bZ^{\beta_1(\omega)}$ is established in Theorem~\ref{mth2}.

 \comment{
	\section{Main result}

	We will show that every nontrivial group $g\in\classGroups$ can be written as a word in the alphabet $\alphabet=\left\lbrace 1 , \bZ, \left( , \right) , \times, \wr_{2}, \wr_{3}, \wr_{4},\dots\right\rbrace $. Let us denote the number of symbols $\bZ$ in $\beta_1 (G)$.
\begin{definition}

Let  $\alphabet=\left\lbrace 1 , \bZ, \left( , \right) , \times, \wr_{2} \bZ, \wr_{3} \bZ, \wr_{4} \bZ,\dots\right\rbrace $ be the set of symbols, which will be called the \textit{alphabet}. By \textit{a word}  we will mean the finite ordered sequence of symbols from the alphabet $\alphabet$. 

Let $\classAdmWords$ be the minimal class of finite words satisfying the following conditions:
\begin{enumerate}
	\item[\rm1)] $1 \in \classAdmWords$;
	\item[\rm2)] if the words $\alpha, \beta \in \classAdmWords$, then $(\alpha)$, $(\beta) $ and $(\alpha) \times (\beta) \in \classAdmWords$;
	\item[\rm3)] if $\alpha \in \classAdmWords$, then $(\alpha)\wr_{n}\bZ \in \classAdmWords$.
\end{enumerate}
\end{definition}
The words from $\classAdmWords$ will be called \textit{admissible}.

Notice that there is no unique correspondence between the groups in $\classGroups$ and the words in $\classAdmWords$ since, for instance, words ${1}\wrm{n}\bZ$ and  $\bZ$ correspond to the same group. To get rid of ambiguity we introduce the class of words $\classAdmWords^\prime$ such that there is a bijection $\psi\colon \classGroups\to\classAdmWords^\prime$.

We will call a part $w^\prime$ of a word $ w\in\classAdmWords$ a {\it syllable} if the letters go in a row.
Notice that we have in $\classAdmWords$ the groups isomorphic to $\bZ$, namely words with exactly one letter $\wrm{n}\bZ$ and without letters $\bZ$ or exactly one letter $\bZ$ and without $\wrm{n}\bZ$.

Denote by $\classAdmWords^\prime$ the subclass of $\classAdmWords$ where syllables isomorhic to $\bZ$ are replaced by $\bZ$.
Evidently, 
there is a bijection $\psi\colon \classGroups\to\classAdmWords^\prime$.

\begin{definition}
The length $\beta_1(\omega)$ of the group $ \beta_1(\omega)\in\classAdmWords$ is defined by the following rules:
\begin{enumerate}
\item $\beta_1(1)=0;$
\item $\beta_1(\bZ)=1$;
\item $\beta_1(\omega_1\times \omega_2)=\beta_1(\omega_1)+\beta_2(\omega_2)$;
\item $\beta_1(\omega\wr_n \bZ)=\beta_1(\omega)+1$.
\end{enumerate}
\end{definition}	
}
\comment{
\section{Centers of wreath products}
	Let  $A$ and $B$ be two groups. Suppose we also have an action $B$ on the set $X$, in other words we have the homomorphism $\varphi$ from $B$ to the permutation group $\Sigma(X)$. Denote the permutation corresponding to an element $b\in B$ by $\varphi_b\colon X\to X$. Denote by $Map(X,A)$ the group of all maps $f: X \rightarrow A$ with respect to the pointwise multiplication. 
	Then the group $B$ acts on $Map(X,A)$ by the following rule: the result of the action of $b\in B$ on $f \in Map(X,A)$ is the composition map:
	\[
	f \circ \varphi(b) :  X \longrightarrow X \longrightarrow A,
	\]
	which will be denoted by $f^{b}$.
	The semidirect product $Map(X,A) \rtimes_{\varphi} B$ corresponding to this action will be denoted by $A \wr_{X} B$ and called the \emph{wreath product} of $A$ and $B$.


Consider the case of non-effective action $B$ on $Map(X,A)$. Denote the orbits of $B$ on $X$ by $\{O_i\}_{i=1}^s$.
\comment{
\begin{proposition}\label{pr: center of wreath product} The center of the group $A\wr_{X} B$  is isomorphic to the direct product of the center of $A$ and the normal group $\ker \varphi \cap Z(B)$, that is 
	\[Z(A\wr_{X} B) \cong Z(A) \times \left(\ker \varphi \cap Z(B) \right).
	\]
\end{proposition}
\begin{proof}
	Assume $(f,l) \in Z(A\wr_{X} B)$, $f \in Map(X,A)$. Then for every element $(g,p)$ of $ A\wr_{X} B$, $g \in Map(X,A)$,  we get equality	$(f,l)(g,p)=(g,p)(f,l)$. Hence 

\[(f^{p} g, lp)  =(g^{l}f, pl)\]

Therefore, $l \in Z(B)$.

Further, if $g(x)=\left\lbrace 
	\begin{array}{l}
g(x_0), \mbox{if } x = x_0,\\
e,  \qquad \mbox{if } x\neq x_0.
	\end{array} \right.$,
	 $x_0 \in X$, we have:
	\begin{equation}
	f^{p}(x) g(x_0)  =g^{l}(x_0)f(x),
	\end{equation}
		\begin{equation}
	f^{p}(x) =g^{l}(e)f (x).
	\end{equation}
	Suppose $l \in \ker \varphi$. It follows from (\ref{the property of center_2}), that $f^{p}(x) =f (x)$, so $f(x)=const$. From (\ref{the property of center_1}) we get $f^{p}(x) g(x_0)  =g (x_0)f(x)$ and so $f(x) \in Z(A)$. 
	If $l \notin \ker \varphi$, then for $x=e$, $p\in ker \varphi$ in (\ref{the property of center_2}) we have
$f(e) =g^{l}(e)f(e)$ and $e=g^{l}(e)$,
	so $l \in \ker \varphi$ and it is contradiction.

	Hence, $f(x) \in Z(A)$ for every $x \in X$ and $l \in \ker \varphi \cap Z(B)$.
	
	Thus, we obtain the bijection $\psi\colon  Z(A\wr_{X}B)\to Z(A) \times \left(\ker \varphi \cap Z(B) \right)$ defined by $\psi(f,l)=(f(x),l)$. It is easy to check that $\psi$ is homomorphism.
\end{proof}	
}

\begin{proposition}\label{pr: center of wreath product}
 The center of the group $A\wr_{X} B$  is isomorphic to the direct product of the centers of $A$ and the normal group $\ker \varphi \cap Z(B)$, that is 
	\[Z(A\wr_{X} B) \cong (Z(A))^s \times \left(\ker \varphi \cap Z(B) \right).
	\]
\end{proposition}
\begin{proof}
	Assume $(f,l) \in Z(A\wr_{X} B)$, $f \in Map(X,A)$. Then for every element $(g,p)$ of $ A\wr_{X} B$, $g \in Map(X,A)$,  we get equality	$(f,l)(g,p)=(g,p)(f,l)$. Hence 

\[(f^{p} g, lp)  =(g^{l}f, pl)\]
Therefore, $l \in Z(B)$.

Further, let us fix $x_0 \in X$ and consider the function $g(x)= 
	\begin{cases}
g(x_0)\neq e, &\mbox{if } x = x_0\\
e, &\mbox{if } x\neq x_0
	\end{cases}$.
	 For every such function we have:
	\begin{equation}\label{the property of center_1}
	f^{p}(x)=f(x), \mbox{ for } x\neq x_0, x\neq\varphi_l^{-1}(x_0).
	\end{equation}
			Notice that the equality (\ref{the property of center_1}) is true for an arbitrary $p \in B$ and we can consider $g(x)$ with another fixed element $x_0$. Therefore $f$ is constant on each orbit of $B$ on $X$, i.e. $f(x)=a_i$ for any $x\in O_i$, $i=\overline{1,s}$.
			There are two cases for the elements $x=x_0, x=\varphi_l^{-1}(x_0)$.
			\begin{enumerate}
				
		\item If $l(x_0)=x_0$ we get
		\begin{equation}\label{the property of center_2}
				f(x_0) g(x_0)  =g(x_0)f(x_0),
				\end{equation}
				\begin{equation}\label{the property of center_3}
			f(\varphi_l^{-1}(x_0))g(x_0) =g(x_0)f (\varphi_l^{-1}(x_0)).
			\end{equation}
		\item If $l(x_0)\neq x_0$ we get
		\begin{equation}\label{the property of center_2}
						f(x_0) g(x_0)  =f(x_0),
						\end{equation}
						\begin{equation}\label{the property of center_3}
					f(\varphi_l^{-1}(x_0)) =g(x_0)f (\varphi_l^{-1}(x_0)).
					\end{equation}
			\end{enumerate}
	 			The second case is impossible since $g(x_0)\neq e$. Hence, 
			$l\in\ker\varphi\cap Z(B)$ and 
			 $f(x_0)\in Z(A)$ for every $x_0 \in X$. 
	
	Thus, we obtain the bijection $\psi\colon  Z(A\wr_{X}B)\to (Z(A))^s \times \left(\ker \varphi \cap Z(B) \right)$ defined by $\psi(f,l)=(a_1,a_2,\dots,a_s,l)$. It is easy to check that $\psi$ is a homomorphism.
\end{proof}	
\comment{
\begin{proposition}\label{pr: center of wreath product}
 The center of the group $A\wr_{X} B$  is isomorphic to the direct product of the center of $A$ and the normal group $\ker \varphi \cap Z(B)$, that is 
	\[Z(A\wr_{X} B) \cong (Z(A))^s \times \left(\ker \varphi \cap Z(B) \right).
	\]
\end{proposition}
\begin{proof}
	Assume $(f,l) \in Z(A\wr_{X} B)$, $f \in Map(X,A)$. Then for every element $(g,p)$ of $ A\wr_{X} B$, $g \in Map(X,A)$,  we get equality	$(f,l)(g,p)=(g,p)(f,l)$. Hence 

\[(f^{p} g, lp)  =(g^{l}f, pl)\]
Therefore, $l \in Z(B)$.

Further, let us fix $x_0 \in X$ and consider the function $g(x)= 
	\begin{cases}
g(x_0)\neq e, &\mbox{if } x = x_0\\
e, &\mbox{if } x\neq x_0
	\end{cases}$.
	 For every such function we have:
	\begin{equation}\label{the property of center_1}
	f^{p}(x)=f(x), x\neq x_0, l(x_0),
	\end{equation}
	\begin{equation}\label{the property of center_2}
		f^{p}(x_0) g(x_0)  =f(x_0),
		\end{equation}
		\begin{equation}\label{the property of center_3}
	f^{p}(l(x_0)) =g(x_0)f (l(x_0)).
	\end{equation}
		Since the equality (\ref{the property of center_1}) is true for an arbitrary $p \in B$, we have $f(x)=const$ for any $x\in X$. Thus, from the equalities (\ref{the property of center_1})-(\ref{the property of center_3}) we obtain the system
		$$\begin{cases}
						f(x_0) g(x_0)  =f(x_0),
				\\
				
			f(l(x_0)) =g(x_0)f (l(x_0)).
			
			\end{cases}$$
			Since $g(x_0)\neq e$ we get $f(x_0)=f(l(x_0)),$ so $l\in\ker\varphi$, and $	f(x_0) g(x_0)  =g(x_0)f(x_0),$ so $f(x_0)\in Z(A)$. 
	Hence, $f(x_0) \in Z(A)$ for every $x_0 \in X$ and $l \in \ker \varphi \cap Z(B)$.
	
	Thus, we obtain the bijection $\psi\colon  Z(A\wr_{X}B)\to Z(A) \times \left(\ker \varphi \cap Z(B) \right)$ defined by $\psi(f,l)=(f(x_0),l)$. It is easy to check that $\psi$ is a homomorphism.
\end{proof}	
}
\comment{
\begin{proposition}\label{pr: center of wreath product} The center of the group $A\wr_{n} B$  is isomorphic to the direct product of the center of $A$ and the normal group $\ker \varphi \cap Z(B)$, that is 
	\[Z(A\wr_{n} B) \cong Z(A) \times \left(\ker \varphi \cap Z(B) \right).
	\]
\end{proposition}
\begin{proof}
	Assume $(u,l) \in Z(A\wr_{n} B)$, $u=(u_1,u_2,\cdots, u_n) \in A^n$. Then for every element $(v,p)$ of $ A\wr_{n} B$, $v=(v_1,v_2,\cdots, v_n) \in A^{n}$,  we get equality	$(u,l)(v,p)=(v,p)(u,l)$. Hence
	\[(\varphi_{p}(u) v, lp)  =(\varphi_{l}(v) u, pl),	\]
	\[(u_{1+p} v_1, u_{2+p} v_2, \cdots, u_{n+p}v_n,lp) = (v_{1+l} u_1, v_{2+l} u_2, \cdots, v_{n+l} u_n,pl).
	\]	
	Therefore, if $v=(x,e, \cdots,e) \in A^n$, $x \in A$, then we have:
	\begin{equation}\label{the property of center}
	(u_{1+p} x, u_{2+p}, \cdots, u_{n+p},lp) = (u_1, u_2, \cdots, x u_{l+1}, \cdots, u_n,pl).
	\end{equation}
	
	Since the equality (\ref{the property of center}) is true for an arbitrary $p \in B$, we have $u_i=a$, $i\in \left\lbrace 1,\cdots,n\right\rbrace $, and $l \in Z(B)$.  Thus,
	\begin{equation}\label{the property of center}
		(ax, a, \cdots, a,lp) = (a, a, \cdots, x a, \cdots, a,pl),
		\end{equation}
	so $l \in \ker \varphi\cap Z(B)$ and $a \in Z(A)$. 	
	
	Thus, we obtain the bijection $\psi\colon  Z(A\wr_{n}B)\to Z(A) \times \left(\ker \varphi \cap Z(B) \right)$ defined by $\psi(a,a,\dots, a,l)=(a,l)$. It is easy to ckeck that $\psi$ is homomorphism.
\end{proof}	
}
The centers of wreath products when $B$ acts on $Map(X,A)$ effectively were considered in the Theorem 4.2 \cite{Meldrum:1995}.




\begin{remark}\label{centers}
	According to Proposition~\ref{pr: center of wreath product}, for groups of the class $\classGroups$ we have:
		\[Z\left(A\mathop{\wr}\limits_{n} \bZ\right)=\left\{(a,a, \dots,a, nk)|a\in Z(A),k\in\bZ\right\} \cong Z(A) \times n\bZ
	\]
		\[Z(A \times B) \cong Z(A) \times Z(B).
	\]
	For example, \[Z(\left( (\bZ \wr_{3} \bZ)\times (\bZ \wr_{5} \bZ) \right)  \wr_{7} \bZ) \cong Z((\bZ \wr_{3} \bZ)\times (\bZ \wr_{5} \bZ)) \times 7\bZ \cong \]	
	\[ \cong Z(\bZ \wr_{3} \bZ) \times Z(\bZ \wr_{5} \bZ) \times 7\bZ \cong \bZ \times 3\bZ \times \bZ \times 5\bZ \times 7\bZ.
	\]
\end{remark}
\comment{
\begin{remark}
Any group $A\in\classGroups$ is a sequence of the form
$$A=\bZ*_1\bZ*_2\cdots *_{n}\bZ $$ with some brackets inside, where $*_i$ is either $\times$ or $\wr_{k_i}$, $k_i\in\bN$.
\end{remark}
\begin{proposition} Let $A\in\classGroups$ is a word with $m$ letters $\wr_{k_1}\bZ,\wr_{k_1}\bZ,\dots, \wr_{k_m}\bZ$ and $n$ letters $\bZ$.
Then 
$$Z(A)\simeq\bZ^{n}\times k_1\bZ\times k_2\bZ\times k_m\bZ. $$
\end{proposition}
}

}

\section{Centers of wreath products}
Let  $A$ and $B$ be two groups. Suppose there is an action of $B$ on the set $X$. In other words we have the homomorphism $\varphi$ from $B$ to the permutation group $\Sigma(X)$. For $b \in B$ denote by $\varphi_b\colon X\to X$ the corresponding permutation. Let also $Map(X,A)$ be the group of all maps $f: X \rightarrow A$ with respect to the pointwise multiplication. 
Then the group $B$ acts on $Map(X,A)$ by the following rule: the result of the action of $b\in B$ on $f \in Map(X,A)$ is the composition map:
\[
f \circ \varphi_b :  X \longrightarrow X \longrightarrow A.
\]
The semidirect product $Map(X,A) \rtimes_{\varphi} B$ corresponding to this action is called the \textit{unrestricted wreath product} of $A$ and $B$ and denoted by $A \ Wr_{X} B$. 
Hence, it is the Cartesian product $Map(X,A) \times B$ with the multiplication given by the formula 
\[ 	(f_1, b_1) \cdot (f_2, b_2)
= \bigl((f_1 \circ \varphi_{b_2})\cdot f_2, b_1 \cdot b_2 \bigr)
\]
for $(f_1, b_1), (f_2, b_2) \in Map(X,A) \rtimes_\varphi B$.

Denote by $\sigma (f)$ the support of the function $f \in Map(X,A)$:
\[\sigma (f)=\left\lbrace x \in X | f(x) \neq e, \mbox { where } e \mbox{ is a unit of } A\right\rbrace,
\]
and by $Map_{fin}(X,A)$ the subset of $Map(X,A)$ consisting only of functions with a finite support, $\left| \sigma (f)\right|<~\infty$.
The semidirect product $Map_{fin}(X,A) \rtimes_{\varphi} B$ is called the \textit{restricted} wreath product, we denote it by $A \ wr_{X} B$.

Notice, that for the center of the group $A$ we use the notation $Z(A)$. Let $\tilde{D}(A)$ denote the subgroup $Map(X,Z(A))$ of functions $h: X \rightarrow Z(A)$ which are constant on each orbit of action of $B$ on $X$, and let $D(A)$ denote the subgroup $Map_{fin}(X,Z(A))$ of functions with the same property. 

It follows from Theorem~4.2 \cite{Meldrum:1995} that there are isomorphisms:
\[
Z(A\ Wr_{X} B) \cong \tilde{D}(A) ,  \qquad	Z(A\ wr_{X} B) \cong D(A),
\]
where the group $B$ acts on $X$ effectively.

In the case of non-effective action of $B$ on $X$ for arbitrary groups $A,B$ we get more general situation. In this subsection we extend the Theorem~4.2 \cite{Meldrum:1995} and consider the case of non-effective action of $B$ on $X$. 

Denote the set of all the orbits of $B$ on $X$ by $\mathcal{O}$, and the set of finite ones by $\mathcal{O}_{fin}$.
Recall that the direct product indexed by infinite set consists of all infinite sequences, while the direct sum consists only of sequences with finitely many elements distinct from zero. 
\begin{theorem}\label{pr: center of wreath product}
	There are the following isomorphisms:
	\begin{equation}\label{unrestricted product}
	Z(A\ Wr_{X} B) = \tilde{D}(A) \times \left(\ker \varphi \cap Z(B) \right) \cong \prod\limits_{\lambda \in \mathcal{O}} Z(A) \times \left(\ker \varphi \cap Z(B) \right),
	\end{equation} 
	\begin{equation}\label{restricted product}
	Z(A\ wr_{X} B)=D(A) \times \left(\ker \varphi \cap Z(B) \right)\cong \bigoplus\limits_{\lambda \in \mathcal{O}_{fin}} Z(A) \times \left(\ker \varphi \cap Z(B) \right).
	\end{equation}
\end{theorem}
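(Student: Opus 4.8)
The plan is to read off the center by unwinding the centrality equation pointwise. Write a generic element of $A\ Wr_{X} B$ as a pair $(f,l)$ with $f\in Map(X,A)$, $l\in B$. Using the multiplication formula, the condition $(f,l)\cdot(g,p)=(g,p)\cdot(f,l)$ for all $(g,p)$ is equivalent to the conjunction of $lp=pl$ for all $p\in B$ --- hence $l\in Z(B)$ --- together with the first-coordinate identity
\[
(f\circ\varphi_p)\cdot g \;=\; (g\circ\varphi_l)\cdot f \qquad \text{for all } g\in Map(X,A),\ p\in B.
\]
First I would plug in $g=e$, the neutral element of $Map(X,A)$, i.e.\ the constant function at the unit of $A$: this gives $f\circ\varphi_p=f$ for every $p\in B$, i.e.\ $f$ is constant on each $B$-orbit of $X$. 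Next, fixing $x_0\in X$ and letting $g$ be supported at the single point $x_0$ with an arbitrary value $g(x_0)=a\neq e$, I would evaluate the identity --- which by the previous step now reads $f\cdot g=(g\circ\varphi_l)\cdot f$ --- at $x=x_0$, obtaining $f(x_0)\,a=g(\varphi_l(x_0))\,f(x_0)$. If $\varphi_l(x_0)\neq x_0$ the right-hand side equals $f(x_0)$, forcing $a=e$, a contradiction; hence $\varphi_l(x_0)=x_0$, and since $x_0$ is arbitrary, $\varphi_l=\id$, i.e.\ $l\in\ker\varphi$. Substituting $\varphi_l(x_0)=x_0$ back, the equation becomes $f(x_0)\,a=a\,f(x_0)$ for all $a\in A$, so $f(x_0)\in Z(A)$. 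Thus $(f,l)\in Z(A\ Wr_{X} B)$ forces $f\in\tilde{D}(A)$ and $l\in\ker\varphi\cap Z(B)$.

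Conversely, if $f\in\tilde{D}(A)$ and $l\in\ker\varphi\cap Z(B)$, then $\varphi_l=\id$ turns the first-coordinate identity into $(f\circ\varphi_p)\cdot g=g\cdot f$; since $f$ is constant on orbits and $\varphi_p$ preserves orbits we have $f\circ\varphi_p=f$, and since $f$ is $Z(A)$-valued we have $f\cdot g=g\cdot f$ pointwise, while $l\in Z(B)$ gives $lp=pl$, so $(f,l)$ is indeed central. To upgrade the resulting set equality to the asserted \emph{internal direct product} decomposition, I would note that, regarded as subgroups of $A\ Wr_{X} B$ via $f\mapsto(f,e)$ and $l\mapsto(e,l)$, both $\tilde{D}(A)$ and $\ker\varphi\cap Z(B)$ lie in the center, meet only in the identity, and satisfy $(f,e)(e,l)=(f,l)$ (valid because $\varphi_l=\id$); this yields the first equality in (\ref{unrestricted product}).

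The remaining isomorphism $\tilde{D}(A)\cong\prod_{\lambda\in\mathcal{O}}Z(A)$ is immediate: a function $X\to Z(A)$ constant on each orbit is precisely a choice of one element of $Z(A)$ for each orbit $\lambda\in\mathcal{O}$, and this correspondence is an isomorphism of abelian groups since both group laws are computed componentwise. For the restricted wreath product the forward argument is identical --- all the test functions used above are finitely supported, hence legitimate elements of $Map_{fin}(X,A)$ --- and one only adds the remark that a finitely supported function that is constant on orbits has support equal to a union of finitely many finite orbits, which identifies $D(A)$ with $\bigoplus_{\lambda\in\mathcal{O}_{fin}}Z(A)$ and gives (\ref{restricted product}). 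I do not anticipate a serious obstacle: the only place that demands real care is that the action of $B$ on $X$ is \emph{not} assumed effective, so one must genuinely extract $l\in\ker\varphi$ (and not merely $l\in Z(B)$) from the single-point test functions, keeping the cases $\varphi_l(x_0)=x_0$ and $\varphi_l(x_0)\neq x_0$ apart; for the restricted statement one additionally needs the finiteness bookkeeping relating supports, orbits, and $\mathcal{O}_{fin}$.
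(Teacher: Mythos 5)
Your proof is correct and follows essentially the same route as the paper: both test the centrality equation against single-point-supported functions together with arbitrary $p\in B$, derive $l\in Z(B)$ and orbit-constancy of $f$, and then use the dichotomy $\varphi_l(x_0)=x_0$ versus $\varphi_l(x_0)\neq x_0$ to force $l\in\ker\varphi$ and $f(x_0)\in Z(A)$. Your minor variations --- using $g=e$ to get $f\circ\varphi_p=f$ directly, and treating the restricted case by observing the test functions are finitely supported rather than via the paper's intersection lemma $Z(A\ wr_X B)=Z(A\ Wr_X B)\cap(A\ wr_X B)$ --- are cosmetic.
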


Let $\tilde{Q}$ be a subgroup of $A \ Wr_X B$ whose elements $(f,l)$ satisfy the conditions: 
\begin{enumerate}
	\item [a)] $f$ is constant on each orbit of $B$ on $X$, i.e. $f(x)=a_{\lambda}$ for any $x\in O_{\lambda}$, $O_{\lambda} \in \mathcal{O}$, and every $a_{\lambda}$ is the element of the center $Z(A)$,
	\item [b)]  $l \in \ker \varphi \cap Z(B)$, where $Z(B)$ is the center of $B$.
\end{enumerate}
Obviously, if  an element of $A \ Wr_X B$ satisfies the conditions a) and b), then the element belongs to the center, so $\tilde{Q} \subset Z( A\ Wr_{X} B)$.

For any $y \in X$ and $c\in A$ we define the function $g_{y,c} \in Map(X,A)$ by:

\begin{equation}\label{eq: element g}
g_{y,c}(x)= 
\begin{cases}
c, &\mbox{if } x =y;\\
e, &\mbox{if } x\neq y.
\end{cases}
\end{equation}

Let $S$ be the set of elements $(g_{y,c},p)$ of $A \ Wr_X B$, where $p \in B$.
The set $S$ is also a subset of the restricted product $A \ wr_X B$. Let the centralizer of the set $S$ of $A \ Wr_X B$  and the centralizer of the set $S$ of $A \ wr_X B$ be denoted by $\tilde{C}(S)$ and $C(S)$ respectively. 	 It is clear that $Z(A\ Wr_X B) \subset \tilde{C}(S)$ and $Z(A\ wr_X B) \subset C(S)$. Therefore, for the group $\tilde{Q}$ we have inclusions:
\[
\tilde{Q} \subset Z(A\ Wr_X B) \subset \tilde{C}(S).
\]

\begin{lemma}\label{lm: center of unrestricted product}
	The following identities hold:
	\[
	Z(A\ Wr_X B)=\tilde{C}(S)=\tilde{Q}.
	\]
\end{lemma}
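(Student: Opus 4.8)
The strategy is to establish the chain of inclusions $\tilde{Q}\subset Z(A\ Wr_X B)\subset\tilde{C}(S)$ — already available from the discussion preceding the lemma — and then close the loop by proving $\tilde{C}(S)\subset\tilde{Q}$. Since the two outer inclusions are immediate, the whole content of the lemma is the single inclusion $\tilde{C}(S)\subset\tilde{Q}$, i.e.\ any element $(f,l)$ commuting with every $(g_{y,c},p)$ must have $f$ constant-on-orbits with values in $Z(A)$ and $l\in\ker\varphi\cap Z(B)$.

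First I would write out the commutation relation $(f,l)(g_{y,c},p)=(g_{y,c},p)(f,l)$ using the multiplication formula $(f_1,b_1)(f_2,b_2)=\bigl((f_1\circ\varphi_{b_2})\cdot f_2,\,b_1b_2\bigr)$. Comparing the $B$-components gives $lp=pl$ for all $p\in B$, hence $l\in Z(B)$. Comparing the $Map(X,A)$-components gives the functional identity $(f\circ\varphi_p)\cdot g_{y,c}=(g_{y,c}\circ\varphi_l)\cdot f$, which I would then evaluate pointwise at a general $x\in X$. Because $g_{y,c}$ is supported at the single point $y$ and $g_{y,c}\circ\varphi_l$ is supported at $\varphi_l^{-1}(y)$, the identity splits into cases according to whether $x$ equals $y$, equals $\varphi_l^{-1}(y)$, both, or neither. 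For $x\notin\{y,\varphi_l^{-1}(y)\}$ it reads $f(\varphi_p(x))=f(x)$; since $p\in B$ and $y\in X$ are arbitrary, letting $y$ avoid the relevant points shows $f(\varphi_p(x))=f(x)$ for all $p\in B$, all $x\in X$, so $f$ is constant on each $B$-orbit; write $f(x)=a_\lambda$ for $x\in O_\lambda$.

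Next I would extract the remaining two constraints from the points $x=y$ and $x=\varphi_l^{-1}(y)$, exactly as in the proof of Proposition~\ref{pr: center of wreath product} above: if $\varphi_l(y)=y$ one gets $a_\lambda c=c\,a_\lambda$ for the orbit $\lambda$ of $y$ and all $c\in A$, forcing $a_\lambda\in Z(A)$; if $\varphi_l(y)\neq y$ one gets $a_\lambda c=a_\lambda$ with $c\neq e$, which is impossible. Since $y$ ranges over all of $X$, the second case never occurs, so $\varphi_l=\id_X$, i.e.\ $l\in\ker\varphi$, and simultaneously every $a_\lambda\in Z(A)$. Combined with $l\in Z(B)$ from the first paragraph, this yields $(f,l)\in\tilde{Q}$, completing $\tilde{C}(S)\subset\tilde{Q}$ and hence the equality $Z(A\ Wr_X B)=\tilde{C}(S)=\tilde{Q}$.

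The only delicate point — and the step I expect to require the most care — is the case analysis forcing $\varphi_l=\id$: one must choose the test element $y$ wisely so that, whenever $\varphi_l$ moves some point, there is a witness $y$ in that orbit producing the contradictory relation $a_\lambda c=a_\lambda$ with $c\neq e$; picking $c\in A$ with $c\neq e$ is always possible unless $A$ is trivial, in which case the statement is vacuous anyway. Everything else is a routine pointwise bookkeeping argument, and the restricted-product analogue needs no separate treatment here since Lemma~\ref{lm: center of unrestricted product} is stated only for the unrestricted product.
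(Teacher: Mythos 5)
Your proposal is correct and follows essentially the same route as the paper's proof: reduce to the single inclusion $\tilde{C}(S)\subset\tilde{Q}$, extract $l\in Z(B)$ from the second coordinate, derive $f\circ\varphi_p=f$ away from the support points to get constancy on orbits, and then run the two-case analysis at $x=y$ to force $l\in\ker\varphi$ and $f(y)\in Z(A)$. Your explicit remark that one must take $c\neq e$ (vacuous only when $A$ is trivial) is a point the paper uses implicitly; otherwise the arguments coincide.
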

\begin{proof}
	
	To prove the lemma it is enough to check the inclusion $\tilde{C}(S) \subset \tilde{Q}$.

	Assume the element $(g_{y,c},p) \in S$ and $(f,l) \in \tilde{C} (S)$, where $f, g_{y,c} \in Map(X,A)$,~$g_{y,c}$ is defined by (\ref{eq: element g}), and $l, p \in B$. Then, by definition, we get equality	$(f,l)(g_{y,c},p)=~(g_{y,c},p)(f,l)$. Hence 
	\[
	\left(( f \circ \varphi_{p}) \cdot g_{y,c}, lp\right)   =\left( (g_{y,c} \circ \varphi_{l}) \cdot f, pl\right) .
	\]
	
	Therefore, $lp=pl$ for any $p$, so $l \in Z(B)$, and we also get 
	\begin{equation}\label{the product of elelments }
	( f \circ \varphi_{p}(x)) \cdot g_{y,c}(x)   = (g_{y,c} \circ \varphi_{l} (x)) \cdot f(x).
	\end{equation}

For $x\neq y, x\neq\varphi_l^{-1}(y)$ in (\ref{the product of elelments }) we have:
		\begin{equation}\label{the property of center_1}
		f \circ \varphi_{p}(x)=f(x).
		\end{equation}

	The equality (\ref{the property of center_1}) is true for an arbitrary $p \in B$, so  $f$ takes the same value on the whole orbit.
	
	Notice that we can choose $g_{y,c}(x)$ with another fixed element $y$. Therefore $f$ is constant on each orbit of $B$ on $X$, i.e. $f(x)=a_{\lambda} $ for any $x\in O_{\lambda}$, $O_{\lambda} \in \mathcal{O}$.
	
	\comment{Substituting $g_{y,c}(x)$ into (\ref{the product of elelments }), we have:
		\begin{equation}\label{the property of center_1}
		f \circ \varphi_{p}(x)=f(x), \mbox{ for } x\neq y, x\neq\varphi_l^{-1}(y).
		\end{equation}
		Notice that the equality (\ref{the property of center_1}) is true for an arbitrary $p \in B$ and we can consider $g_{y,c}(x)$ with another fixed element $y$. Therefore $f$ is constant on each orbit of $B$ on $X$, i.e. $f(x)=a_{\lambda} $ for any $x\in O_{\lambda}$, $O_{\lambda} \in \mathcal{O}$.
	}
	It remains to show that every $a_{\lambda}$ is the element of the center $Z(A)$ and $l \in \ker \varphi$.
	
	To show this let us analyze the equality (\ref{the product of elelments }) for the case $x=y$.
	There are two cases:
	\begin{enumerate}
		\item [(i)] if $\varphi_l(y)=y$ we get
		\[f(y) g_{y,c}(y)  =g_{y,c}(y)f(y),	\]
		\item [(ii)] if $\varphi_l(y)\neq y$ we get
		\[f(y) g_{y,c}(y)  =f(y).\]
	\end{enumerate}
	The second case is impossible since $g_{y,c}(y)\neq e$.  Hence, $l\in\ker\varphi$.  It follows from the first case that for every $y \in X$  we get $f(y)\in Z(A)$ since $g_{y,c}(y)=c$, where $c$ is an arbitrary element of $A$.
	So, we obtain that the conditions a) and b) hold. 	
	
\end{proof}
\begin{lemma}\label{lm: center of restricted product}
	The center $Z(A \ wr_X B)$ is the intersection of $Z(A\ Wr_X B)$ and $A\ wr_X B$, i.e.
	\[
	Z(A \ wr_X B)=Z(A\ Wr_X B) \cap (A\ wr_X B).
	\]
\end{lemma}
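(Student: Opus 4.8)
The plan is to derive this from Lemma~\ref{lm: center of unrestricted product} by a short centralizer argument, with no need to repeat the computation carried out there. One inclusion, namely $Z(A\ Wr_X B) \cap (A\ wr_X B) \subseteq Z(A \ wr_X B)$, is immediate: an element of $Z(A\ Wr_X B)$ commutes in particular with every element of the subgroup $A\ wr_X B$, so if it also belongs to $A\ wr_X B$ then it is central there.

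For the reverse inclusion I would start from the observation, already made above, that the test set $S=\{(g_{y,c},p)\}$ is contained in the restricted product $A\ wr_X B$; hence $Z(A\ wr_X B)\subseteq C(S)$. The next point is that $C(S)\subseteq\tilde C(S)$: for $(f,l)\in A\ wr_X B$ the equation expressing that $(f,l)$ commutes with a given $(g_{y,c},p)$ is the same whether the product is formed in $A\ wr_X B$ or in $A\ Wr_X B$, because the multiplication of the subgroup is the restriction of that of the overgroup. By Lemma~\ref{lm: center of unrestricted product} we have $\tilde C(S)=Z(A\ Wr_X B)$, and since $C(S)$ lies in $A\ wr_X B$ by definition, this yields $C(S)\subseteq Z(A\ Wr_X B)\cap(A\ wr_X B)$. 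Combining,
\[
Z(A\ wr_X B)\subseteq C(S)\subseteq Z(A\ Wr_X B)\cap(A\ wr_X B)\subseteq Z(A\ wr_X B),
\]
so all four sets coincide and the claimed identity follows.

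I do not expect a genuine obstacle here: the substantive work — showing that commuting with the small family $S$ already forces conditions a) and b) — was done in Lemma~\ref{lm: center of unrestricted product}. The only things that need to be checked are the easy but essential facts that $S\subseteq A\ wr_X B$ and that membership in the centralizer of $S$ does not depend on whether one computes inside the restricted or the unrestricted product; both hold simply because $A\ wr_X B$ is a subgroup of $A\ Wr_X B$ containing $S$. An alternative would be to rerun the explicit argument of the previous lemma under the extra hypothesis that $f$ has finite support, but the centralizer formulation is shorter and reuses the earlier result verbatim.
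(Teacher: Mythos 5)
Your argument is correct and is essentially identical to the paper's proof: both establish the easy inclusion directly and obtain the reverse one via the chain $Z(A\ wr_X B)\subseteq C(S)\subseteq \tilde C(S)\cap (A\ wr_X B)=Z(A\ Wr_X B)\cap (A\ wr_X B)$, invoking Lemma~\ref{lm: center of unrestricted product}. Your write-up merely makes explicit the (true and worth noting) point that centralizing $S$ inside the subgroup is the same condition as centralizing it in the overgroup.
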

\begin{proof}
	Indeed, the inclusion $\left( Z(A\ Wr_X B) \cap (A\ wr_X B)\right)  \subset Z(A \ wr_X B)$ is obvious.
	
	Let us check the inverse inclusion. Assume $(f,l) \in Z(A \ wr_X B)$. Since $S \subset A \ wr_X B$ we get 
	\[
	Z(A \ wr_X B) \subset C(S) \subset \tilde{C}(S) \cap (A \ wr_X B)  \stackrel{\mbox{{\tiny Lemma~\ref{lm: center of unrestricted product}}}}{=}  Z(A\ Wr_X B) \cap (A\ wr_X B).
	\]
	
\end{proof}
\comment{
	Let $D(A)$ denotes the group $Map(X,Z(A))$ of functions $h: X \rightarrow Z(A)$ which are constant on each orbit of action of $B$ on $X$. In this notation it was proved in Lemma~\ref{lm: center of unrestricted product} that $D(A)\times \left(\ker \varphi \cap Z(B) \right)$ is the subgroup of $A\ Wr_{X}B$ such that
	\[Z(A\ Wr_{X}B) = D(A)\times \left(\ker \varphi \cap Z(B) \right).\]
	Let also $D_{fin}(A)$ be the group $Map_{fin}(X,Z(A))$ of functions $h: X \rightarrow Z(A)$ which are constant on each orbit of action of $B$ on $X$. According to Lemma~\ref{lm: center of restricted product} 
	\[Z(A\ wr_{X}B) = D_{fin}(A)\times \left(\ker \varphi \cap Z(B) \right).\]
}

\begin{proof}[Proof of Theorem \ref{pr: center of wreath product}]
	
If we have the unrestricted product $A \ Wr_{X}B$, then the number of orbits in $Z(A\ Wr_{X}B)$ 
can be infinite. If we have the restricted product  $A\ wr_{X}B$, then there can be only a finite number of finite orbits in $Z(A\ wr_{X}B)$. 
	
	
\comment{	If we have the unrestricted product $A \ Wr_{X}B$, then the number of orbits in $Z(A\ Wr_{X}B)$ 
	 can be infinite, so in this case the cardinal number of $\mathcal O$ will be denoted by $\mbox{card} \Lambda$.
	If we have the restricted product  $A\ wr_{X}B$, then there can be only a finite number of finite orbits in $Z(A\ wr_{X}B)$, we will denote such number by $\mbox{card} \Lambda_{fin}$. }

	According to Lemma~\ref{lm: center of unrestricted product}  and Lemma~\ref{lm: center of restricted product}, there are bijections
	\[\psi_1\colon   Z(A\ Wr_{X}B)\to \prod\limits_{\lambda \in \Lambda} Z(A) \times \left(\ker \varphi \cap Z(B) \right),\]
	\[\psi_2\colon  Z(A\ wr_{X}B)\to \bigoplus\limits_{\lambda \in \Lambda_{fin}} Z(A) \times \left(\ker \varphi \cap Z(B) \right)\]
	defined by $\psi_1(f,l)=(a_1,a_2,\dots,a_{s_1},l)$ and $\psi_2(f,l)=(a_1,a_2,\dots,a_{s_2},l)$. It is easy to check that $\psi_1$ and $\psi_2$ are homomorphisms.
\end{proof}

\begin{remark} If $X=\bZ_n$ we will denote a wreath product of $A$ and $B$  by $A \wr_n B$. In this notation it was proved in Theorem \ref{pr: center of wreath product} that $D(A)\times \left(\ker \varphi \cap Z(B) \right)$ is the subgroup of $A\wr_{n}B$ such that
	\[Z(A\wr_{n}B) = D(A)\times \left(\ker \varphi \cap Z(B) \right).\]
\end{remark}

\begin{cor}\label{centers} 
	\[Z\left(A\mathop{\wr}\limits_{n} \bZ\right)=\left\{(a,a, \dots,a, nk)|a\in Z(A),k\in\bZ\right\} \cong D(A) \times n\bZ \cong Z(A)\times \bZ.
	\]
	\[Z(A \times B) \cong Z(A) \times Z(B).
	\]
\end{cor}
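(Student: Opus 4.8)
The plan is to obtain both isomorphisms as immediate specializations of Theorem~\ref{pr: center of wreath product}, with the bulk of the (routine) work being the identification of the abstract data $X$, $B$, $\varphi$ in the concrete case $A\wr_n\bZ$.

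First I would recognize $A\wr_n\bZ$ as the restricted wreath product $A\ wr_X B$ for $X=\bZ_n$, $B=\bZ$, and $\varphi\colon\bZ\to\Sigma(\bZ_n)$ the homomorphism sending $k$ to the cyclic shift by $k$. Since $\bZ_n$ is finite, $Map_{fin}(\bZ_n,A)=Map(\bZ_n,A)\cong A^n$, and one checks that under this identification the $B$-action by precomposition with $\varphi$ coincides with the action of $\bZ$ on $A^n$ by cyclic shifts of coordinates used in the Introduction; hence the two semidirect products agree.

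Next I would compute the three ingredients appearing in formula~\eqref{restricted product}. The shift action of $\bZ$ on $\bZ_n$ is transitive, so there is exactly one orbit, namely $X$ itself, and it is finite; thus $\mathcal O=\mathcal O_{fin}=\{X\}$ and $\bigoplus_{\lambda\in\mathcal O_{fin}}Z(A)\cong Z(A)$. A function $X\to Z(A)$ that is constant on this single orbit is just a constant function, so $D(A)=Map_{fin}(X,Z(A))$ consisting of such functions is isomorphic to $Z(A)$; under the identification $Map(\bZ_n,A)\cong A^n$ it consists precisely of the tuples $(a,a,\dots,a)$ with $a\in Z(A)$. Finally $\ker\varphi=\{k\in\bZ\mid k\equiv 0\ \mathrm{mod}\ n\}=n\bZ$, and since $\bZ$ is abelian $Z(B)=\bZ$, whence $\ker\varphi\cap Z(B)=n\bZ\cong\bZ$. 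Substituting into~\eqref{restricted product} gives $Z(A\wr_n\bZ)=D(A)\times n\bZ\cong Z(A)\times\bZ$, and tracing the element $(f,l)$ of the center through the identifications shows that this subgroup is exactly $\{(a,a,\dots,a,nk)\mid a\in Z(A),\,k\in\bZ\}$, which is the first displayed formula.

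For the second displayed isomorphism $Z(A\times B)\cong Z(A)\times Z(B)$ I would simply invoke the elementary fact that $(x,y)$ commutes with every $(x',y')\in A\times B$ if and only if $x$ commutes with every $x'\in A$ and $y$ with every $y'\in B$; this needs nothing beyond the definitions. There is no real obstacle in either part; the only point requiring a little care is the bookkeeping in the first part, namely verifying that the two descriptions of the defining $\bZ$-action (cyclic shift of coordinates of $A^n$ versus precomposition with the shift permutation of $\bZ_n$) match, and keeping track of the single-orbit reduction that collapses the direct sum to one copy of $Z(A)$.
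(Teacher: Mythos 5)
Your proposal is correct and follows essentially the same route as the paper: the paper likewise deduces the first isomorphism by specializing Theorem~\ref{pr: center of wreath product} to the case of the single (transitive, finite) orbit of $\bZ$ acting on $\bZ_n$, with $\ker\varphi\cap Z(\bZ)=n\bZ$, and dismisses the second isomorphism as obvious. You merely spell out the routine identifications (matching the coordinate-shift action with precomposition, and collapsing the direct sum to one copy of $Z(A)$) in more detail than the paper does.
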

Indeed, for groups $A\mathop{\wr}\limits_{n} \bZ$ of the class $\classGroups$ we have only one orbit of the action of $B$ on~$X$. According to Proposition~\ref{pr: center of wreath product},  we obtain the first equivalence of Corollary~\ref{centers}. The second is obvious. 

For example, \[Z \Big( \big( (\bZ \wr_{3} \bZ)\times (\bZ \wr_{5} \bZ) \big)  \wr_{7} \bZ \Big)  \cong Z \Big( (\bZ \wr_{3} \bZ)\times (\bZ \wr_{5} \bZ)\Big) \times 7\bZ \cong \]
\[\cong Z \Big( (\bZ \wr_{3} \bZ)\times (\bZ \wr_{5} \bZ)\Big) \times \bZ \cong Z\big(\bZ \wr_{3} \bZ\big) \times Z\big(\bZ \wr_{5} \bZ\big) \times \bZ \cong\]
\[ \cong \bZ \times 3\bZ \times \bZ \times 5\bZ \times \bZ \cong \bZ^4 \times \bZ.
\]

\comment{
\begin{theorem}\label{mth1}
Let $G\in\classGroups$, $\omega$ be any presentation of $G$ in the alphabet $\alphabet $, and $\beta_1 (\omega)$ be the number of symbols $\bZ$ in the presentation $\omega$. Then $Z(G)\simeq\bZ^{\beta_1(\omega)}$.
\end{theorem}
\begin{proof}
The proof follows from Remark \ref{centers} and the induction on $\beta_1 (\omega)$. It will be convenient to denote by $\widetilde{\omega}$ the group from the class $\classGroups$ determined by a word $\omega$. In particular, $\omega$ is a presentation of $\widetilde{\omega}$ in the alphabet $\alphabet $. 
Since $\widetilde{\omega}$ is a group isomorphic to $G$, we have
	\[ Z(G)=Z(\widetilde{\omega}).
	\]
Obviously, if $\beta_1(\omega)=1$, then $Z(G)\simeq \bZ$.
	Suppose we proved that $Z(G)\simeq\bZ^{\beta_1(\omega)}$ for all words with $\beta_1(\omega)\leq k$. Let us show this for $\beta_1(\omega)=k+1$. In this case the presentation $\omega$ can be written in two ways:
	
	\begin{enumerate}
		\item [1)] as a direct product $\omega_1\times\omega_2$ such that $\beta(\omega_1)+\beta_1(\omega_2)=k+1$, $\beta(\omega_1)\leq k$, $\beta(\omega_2)\leq k$;
		\item[2)] as a wreath product $\omega_1 \wr_{n} \bZ$, where $\beta_1(\omega_1)=k$.
	\end{enumerate}
	According to Remark \ref{centers} and the induction assumption we get for the first case 
	\[Z(\widetilde{\omega_1} \times \widetilde{\omega_2}) \cong Z(\widetilde{\omega_1}) \times Z(\widetilde{\omega_2})\simeq\bZ^{\beta_1(\omega_1)}\times\bZ^{\beta_1(\omega_2)}\simeq\bZ^{\beta_1(\omega_1)+\beta_1(\omega_2)}\simeq\bZ^{k+1},
		\] and for the second case
	\[
	Z(\widetilde{\omega})\simeq Z(\widetilde{\omega_1} \wr_{n} \bZ) \simeq Z(\widetilde{\omega_1}) \times n\bZ \simeq Z(\omega_1) \times \bZ\simeq\bZ^{\beta_1(\omega_1)}\times \bZ\simeq\bZ^{k}\times \bZ\simeq\bZ^{k+1}.
	\]
	
\end{proof}
}

\begin{theorem}\label{mth1}
Let $G\in\classGroups$, $\omega$ be any presentation of $G$ in the alphabet $\alphabet $, and $\beta_1 (\omega)$ be the number of symbols $\bZ$ in the presentation $\omega$. Then $Z(G)\simeq\bZ^{\beta_1(\omega)}$.
\end{theorem}
\begin{proof}
The proof follows from Remark \ref{centers} and the induction on the number of symbols 1 and $\bZ$ in the presentation $\omega$ denoted by $l(\omega)$. It will be convenient to denote by $\widetilde{\omega}$ the group from the class $\classGroups$ determined by a word $\omega$. In particular, $\omega$ is a presentation of $\widetilde{\omega}$ in the alphabet $\alphabet $. 
Since $\widetilde{\omega}$ is a group isomorphic to $G$, we have
	\[ Z(G)=Z(\widetilde{\omega}).
	\]
Obviously, if $l(\omega)=1$, then $\omega$ is either 1 or $\bZ$, so $Z(G)\simeq 1$ or $Z(G)\simeq \bZ$ respectively.
	Suppose we proved that $Z(G)\simeq\bZ^{\beta_1(\omega)}$ for all words with $l(\omega)\leq k$. Let us show this for $l(\omega)=k+1$. In this case the presentation $\omega$ is either
\begin{enumerate}	
\item a direct product $\omega_1\times\omega_2$ such that $l(\omega_1)+l(\omega_2)=k+1$, $l(\omega_1)\leq k$, $l(\omega_2)\leq k$, or
 
\item a wreath product $\omega_1 \wr_{n} \bZ$, where $l(\omega_1)=k$.
\end{enumerate}
	According to Remark \ref{centers}, the induction assumption and the evident observation $\beta_1(\omega_1)+\beta_1(\omega_2)={\beta_1(\omega_1\times\omega_2)}$ we get for the first case 
	\[Z(\widetilde{\omega_1} \times \widetilde{\omega_2}) \cong Z(\widetilde{\omega_1}) \times Z(\widetilde{\omega_2})\simeq\bZ^{\beta_1(\omega_1)}\times\bZ^{\beta_1(\omega_2)}\simeq\bZ^{\beta_1(\omega_1)+\beta_1(\omega_2)}\simeq\bZ^{\beta_1(\omega_1\times\omega_2)}\simeq\bZ^{\beta_1(\omega)},
		\] and for the second case
	\[
	Z(\widetilde{\omega})\simeq Z(\widetilde{\omega_1} \wr_{n} \bZ) \simeq Z(\widetilde{\omega_1}) \times \bZ\simeq\bZ^{\beta_1(\omega_1)}\times \bZ\simeq\bZ^{\beta_1(\omega_1\wr_n\bZ)}\simeq\bZ^{\beta_1(\omega)}.
	\]
	
\end{proof}

\section{Commutator subgroup}
\begin{theorem}\label{cs}
For any group $G$ the commutator subgroup of $G\mathop{\wr}\limits_n\bZ$ coincides with the following group 
$$\left[ G\mathop{\wr}\limits_n\bZ, G\mathop{\wr}\limits_n\bZ \right]=\left\{(g_1, g_2, \dots, g_n, 0) | \prod_{i=1}^n g_i\in [G,G]\right \}$$
\end{theorem}

\begin{proof}

Let us first show that every $g=(g_1, g_2, \dots, g_n, 0)$ such that $\prod_{i=1}^n g_i\in [G,G]$ is in $\com$. We will prove that the elements $h_1, h_2$ of the group $G\wr_n\bZ$, 
\[h_1=(g_1, g_2, \dots, g_n, k),\quad h_2=(g_1, g_2, \dots, g_{n-2},g_{n-1}g_n, e,k),\]
lie in the same conjugacy class, i.e. 
\begin{equation}\label{eq: the same conjugacy class}
h_2=h_1f, \mbox{ where } f\in\com.
\end{equation}

 Indeed, $f=(e,e,\dots,e,g_n,g_n^{-1},0)$ satisfies the equality (\ref{eq: the same conjugacy class}). It is easy to check that $f=~[c,d]$, where $c=(e,e,\dots, e, g_n^{-1},1)$,  $d=(e,e,\dots, e, g_n,0)$, and hence $f\in \com$.

   Similarly, by induction, we can obtain that the elements 
   \[h_1=(g_1, g_2, \dots, g_n, k), \ h_3=\left( \prod_{i=1}^n g_i,e,e, \dots,e, k \right) \]
   lie in the same conjugacy class. 
   
   Notice that for elements $\alpha=(a,e,\dots,e,0)$, $\beta=(b,e,e,\dots,e,0)$ from $G\wr_n\bZ$ we have the equality 
   \[[\alpha,\beta]=([a,b], e,e,\dots,e,0).\]
 
   So, every $g=(g_1, g_2, \dots, g_n, 0)$ such that $\prod_{i=1}^n g_i\in [G,G]$ is in the same conjugacy class with the element
    \[\left( \prod_{i=1}^n g_i,e,e, \dots,e, 0)\right) \in\langle  ([a,b], e,e,\dots,e,0)\rangle=\com.\]

Let now $g=(g_1, g_2, \dots, g_n, k)\in\com$. Let also
$a=(a_1,a_2,\dots,a_n,l)$  and $b=(b_1,b_2,\dots, b_n,p)$ 
 be the elements in $ G\wr_n\bZ.$ By straightforward calculation we obtain
\begin{equation}\label{eq: com}
aba^{-1}b^{-1}=(a_{1-l}b_{1-l-p}a^{-1}_{1-l-p}b^{-1}_{1-p},\dots,a_{n-l}b_{n-l-p}a^{-1}_{n-l-p}b^{-1}_{n-p},0).
\end{equation}

 So, evidently, $k=0$. Obviously, in the notation (\ref{eq: com}) each $a_i$, $b_i$, $a^{-1}_i$, $b^{-1}_i$ enters once for each commutator $aba^{-1}b^{-1}$ and its inverse.
  Each $g\in\com$ is generated by commutators with such property, so the product of its first $n$ coordinates has a form 
  \[\prod_{i=1}^n g_i=c_1^{i_1}c_2^{i_2}\cdots c_r^{i_r},  i_r\in \{\pm 1\}, c_i\in G,\]
   where $c_i$ may not be different, but the sum of powers of same elements is always zero. 
   
   Since $c_ic_j=c_jc_i[c_i^{-1}, c_j^{-1}]$ by permutations we cancel out all $c_i$, so only commutators will remain. Thus, we get $\prod_{i=1}^{n}g_i\in [G,G]$.
 
\end{proof}

\comment{

\section{Commutator subgroup}
\begin{theorem}\label{cs}
For any group $G$ the commutator subgroup of $G\mathop{\wr}\limits_n\bZ$ coincides with the following group 
$$\left[ G\mathop{\wr}\limits_n\bZ, G\mathop{\wr}\limits_n\bZ \right]=\left\{(g_1, g_2, \dots, g_n, 0) | \prod_{i=1}^n g_i\in [G,G]\right \}$$
\end{theorem}

\begin{proof}

Let us first show that every $g=(g_1, g_2, \dots, g_n, 0)$ such that $\prod_{i=1}^n g_i\in [G,G]$ is in $\com$. We will show that the elements $h_1=(g_1, g_2, \dots, g_n, k)$, $h_2=(g_1, g_2, \dots, g_{n-2},g_{n-1}g_n, e,k)$ of $G\wr_n\bZ$ lie in the same conjugacy class, i.e. $h_2=h_1f$, where $f\in\com$.
 Indeed, $f=(e,e,\dots,e,g_n,g_n^{-1},0)$ satisfies the equality $h_2=h_1f$. It is easy to check that $f\in \com$, since $f=[c,d]$, where ${c=(e,e,\dots, e, g_n^{-1},1)}$, $d=(e,e,\dots, e, g_n,0)$.  Thus, by induction we obtain that the elements $h_1=(g_1, g_2, \dots, g_n, k)$ and $h_3=(\prod_{i=1}^n g_i,e,e, \dots,e, k)$ lie in the same conjugacy class. Notice that for elements $\alpha=(a,e,\dots,e,0)$, $\beta=(b,e,e,\dots,e,0)$ from $G\wr_n\bZ$ we have the equality $[\alpha,\beta]=([a,b], e,e,\dots,e,0)$.
   So, every $g=(g_1, g_2, \dots, g_n, 0)$ such that $\prod_{i=1}^n g_i\in [G,G]$ is in the same conjugacy class with the element $(\prod_{i=1}^n g_i,e,e, \dots,e, 0)\in< ([a,b], e,e,\dots,e,0)>=\com$.

Let now $g=(g_1, g_2, \dots, g_n, k)\in\com$. Let also $a=(a_1,a_2,\dots,a_n,l)$ and $b=(b_1,b_2,\dots, b_n,p)$ be the elements in $ G\wr_n\bZ.$ By straightforward calculation we obtain
\begin{equation} \label{comut}
aba^{-1}b^{-1}=(a_{1-l}b_{1-l-p}a^{-1}_{1-l-p}b^{-1}_{1-p},\dots,a_{n-l}b_{n-l-p}a^{-1}_{n-l-p}b^{-1}_{n-p},0).
\end{equation}

 So, evidently, $k=0$. Obviously, in the notation (\ref{comut}) each $a_i$, $b_i$, $a^{-1}_i$, $b^{-1}_i$ enters once for each commutator $aba^{-1}b^{-1}$ and its inverse.
  Each $g\in\com$ is generated by commutators with such property, so the product of its first $n$ coordinates has a form $\prod_{i=1}^n g_i=c_1^{i_1}c_2^{i_2}\cdots c_r^{i_r}$, $i_r\in \{\pm 1\} $, $c_i\in G$, where $c_i$ may not be different, but the sum of powers of same elements is always zero. Since $c_ic_j=c_jc_i[c_i^{-1}, c_j^{-1}]$ by permutations we cancel out all $c_i$, so only commutators will remain. Thus, we get $\prod_{i=1}^{n}g_i\in [G,G]$.
 
\end{proof}
}
\begin{theorem} \label{com}
For any group $G$ we have the following isomorphisms of quotient groups
$$G\mathop{\wr}_{n}\bZ \bigl/ \com   \cong G/[G,G] \times \bZ.$$
\end{theorem}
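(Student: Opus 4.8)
The plan is to construct an explicit surjective homomorphism
$\Phi\colon G\mathop{\wr}_n\bZ \to G/[G,G]\times\bZ$ whose kernel is exactly the subgroup
described in Theorem~\ref{cs}, and then to invoke the first isomorphism theorem. Concretely, I would let $\pi\colon G\to G/[G,G]$ be the abelianization homomorphism and set
\[
\Phi(g_1,\dots,g_n,k)=\bigl(\pi(g_1)\pi(g_2)\cdots\pi(g_n),\,k\bigr).
\]
Since $G/[G,G]$ is abelian, the product $\pi(g_1)\cdots\pi(g_n)$ does not depend on the order of the factors, so this is unambiguous.

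Next I would verify that $\Phi$ is a homomorphism. Writing the product in $G\mathop{\wr}_n\bZ$ as $(g,k)(h,l)=(g\cdot\varphi(k)(h),\,k+l)$, the point is that $\varphi(k)$ merely cyclically permutes the coordinates of $h$; hence the $i$-th coordinate of $g\cdot\varphi(k)(h)$ is $g_i$ times a coordinate of $h$, with each coordinate of $h$ used exactly once. Applying $\pi$ and multiplying all $n$ coordinates, the $h$-contributions reassemble in the abelian group $G/[G,G]$ to $\prod_i\pi(h_i)$, independently of the permutation and of the interleaving with the $g_i$. This yields $\Phi\bigl((g,k)(h,l)\bigr)=\Phi(g,k)\,\Phi(h,l)$. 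Surjectivity is immediate, since $\Phi(g,e,\dots,e,k)=(\pi(g),k)$.

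It remains to identify $\ker\Phi$: an element $(g_1,\dots,g_n,k)$ lies in $\ker\Phi$ precisely when $k=0$ and $\pi\bigl(\prod_{i=1}^n g_i\bigr)=e$, i.e. $\prod_{i=1}^n g_i\in[G,G]$. By Theorem~\ref{cs} this subgroup is exactly $\com$, so the first isomorphism theorem gives
$G\mathop{\wr}_n\bZ\bigl/\com\cong G/[G,G]\times\bZ$, as claimed. The computations are all routine; the only step needing a little care is the coordinate bookkeeping showing $\Phi$ is multiplicative — but this reduces to the elementary observation that a cyclic shift is a permutation, so after abelianizing the order of multiplication of the coordinates becomes irrelevant. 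Everything else is a direct appeal to Theorem~\ref{cs}.
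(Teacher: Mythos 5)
Your proposal is correct and follows essentially the same route as the paper: the authors define the same map $(g_1,\dots,g_n,k)\mapsto\bigl(\bigl(\prod_i g_i\bigr)[G,G],k\bigr)$, verify multiplicativity using that $G/[G,G]$ is abelian, note surjectivity via $(h,e,\dots,e,k)$, and identify the kernel with $\com$ via Theorem~\ref{cs}. Nothing further is needed.
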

\begin{proof}
Let us construct a homomorphism $\varphi\colon G\wr_n\bZ\to G / [G,G] \times \bZ$ defined by 

$$\varphi(g_1, g_2, \dots, g_n, k)=\left(\left(\prod_{i=1}^n g_i \right )[G,G], k\right).$$

To check that $\varphi$ is a homomorphism we compute 
\begin{gather*}
\varphi(a_1, a_2, \dots, a_n, k)\varphi(b_1, b_2, \dots, b_n, p)=(a_1 a_2\cdots a_n[G,G], k)(b_1 b_2 \cdots b_n[G,G], p)=\\
=(a_1 a_2\cdots a_nb_1 b_2 \cdots b_n[G,G], kp),\\
\varphi((a_1, a_2, \dots, a_n, k)(b_1, b_2, \dots, b_n, p))=\varphi(a_{(1+p) \operatorname{ mod }n}b_1, a_{(2+p) \operatorname{ mod }n}b_2, \dots, a_{(n+p) \operatorname{ mod }n}b_n, kp)=\\
=(a_{(1+p) \operatorname{ mod }n}b_1 a_{(2+p) \operatorname{ mod }n}b_2 \cdots a_{(n+p) \operatorname{ mod }n}b_n[G,G], kp).\\
\end{gather*}

Since $G/[G,G]$ is abelian we get 
$$ (a_{(1+p) \operatorname{ mod }n}b_1 a_{(2+p) \operatorname{ mod }n}b_2 \cdots a_{(n+p) \operatorname{ mod }n}b_n[G,G], kp)=(a_1 a_2\cdots a_nb_1 b_2 \cdots b_n[G,G], kp),$$
hence $\varphi$ is a homomorphism.
 
It is onto since for each element $(h,n)\in G/ [G,G] \times \bZ$ there is $(h,e,e,\cdots,n)$ which satisfies 
$$\varphi(h,e,e,\dots,n)=(h,n).$$
 The kernel of $\varphi$ is then $\ker \varphi=\{(g_1, g_2, \dots, g_n, 0) | \prod_{i=1}^n g_i\in [G,G]\}$, which coincides with $\com$.  
\end{proof}

\begin{theorem}\label{mth2}
Let $G\in\classGroups$, $\omega$ be any presentation of $G$ in the alphabet $\alphabet $, and $\beta_1 (\omega)$ be the number of symbols $\bZ$ in the presentation $\omega$. Then $G/[G,G]\simeq\bZ^{\beta_1(\omega)}$.
\end{theorem}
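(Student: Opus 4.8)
The plan is to proceed by the same structural induction used in the proof of Theorem~\ref{mth1}, with Theorem~\ref{com} now playing the role that Corollary~\ref{centers} played there. As before, denote by $\widetilde{\omega}$ the group of $\classGroups$ determined by a word $\omega$, so that $G/[G,G]=\widetilde{\omega}/[\widetilde{\omega},\widetilde{\omega}]$, and induct on the number $l(\omega)$ of symbols $1$ and $\bZ$ occurring in $\omega$. If $l(\omega)=1$ then $\omega$ is $1$ or $\bZ$, so $G$ is trivial or infinite cyclic and $G/[G,G]$ is $1=\bZ^{0}$ or $\bZ=\bZ^{1}$, in agreement with $\beta_1(\omega)$.

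For the inductive step, suppose the claim holds for all words of length at most $k$ and let $l(\omega)=k+1$. Exactly as in Theorem~\ref{mth1}, $\omega$ is then either a direct product $\omega_1\times\omega_2$ with $l(\omega_1),l(\omega_2)\le k$, or a wreath product $\omega_1\wr_n\bZ$ with $l(\omega_1)=k$. In the first case one uses the elementary identity $[\widetilde{\omega_1}\times\widetilde{\omega_2},\,\widetilde{\omega_1}\times\widetilde{\omega_2}]=[\widetilde{\omega_1},\widetilde{\omega_1}]\times[\widetilde{\omega_2},\widetilde{\omega_2}]$, valid for arbitrary groups, which together with the induction hypothesis and the additivity $\beta_1(\omega_1)+\beta_1(\omega_2)=\beta_1(\omega_1\times\omega_2)$ gives
\[
(\widetilde{\omega_1}\times\widetilde{\omega_2})/[\widetilde{\omega_1}\times\widetilde{\omega_2},\widetilde{\omega_1}\times\widetilde{\omega_2}]\cong \widetilde{\omega_1}/[\widetilde{\omega_1},\widetilde{\omega_1}]\times\widetilde{\omega_2}/[\widetilde{\omega_2},\widetilde{\omega_2}]\cong\bZ^{\beta_1(\omega_1)}\times\bZ^{\beta_1(\omega_2)}\cong\bZ^{\beta_1(\omega)}.
\]
In the second case Theorem~\ref{com} applied to the group $\widetilde{\omega_1}$, the induction hypothesis, and the relation $\beta_1(\omega_1\wr_n\bZ)=\beta_1(\omega_1)+1$ give
\[
\widetilde{\omega}/[\widetilde{\omega},\widetilde{\omega}]=(\widetilde{\omega_1}\wr_n\bZ)/[\widetilde{\omega_1}\wr_n\bZ,\widetilde{\omega_1}\wr_n\bZ]\cong \widetilde{\omega_1}/[\widetilde{\omega_1},\widetilde{\omega_1}]\times\bZ\cong\bZ^{\beta_1(\omega_1)}\times\bZ\cong\bZ^{\beta_1(\omega)}.
\]
This completes the induction; the final assertion that $\beta_1(\omega)$ depends only on $G$ then follows since the rank of a finitely generated free abelian group is an isomorphism invariant.

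I expect essentially no hard step here: all the real content is already carried by Theorems~\ref{cs} and~\ref{com}, which identify the commutator subgroup of $G\wr_n\bZ$ and the quotient $G\wr_n\bZ/\bigl[G\wr_n\bZ,G\wr_n\bZ\bigr]$. The only points that deserve a sentence are the case split of an arbitrary presentation (identical to the one in Theorem~\ref{mth1}, using that $\classGroups$ is generated from the trivial group by $\times$ and $\wr_n\bZ$) and the standard fact about commutator subgroups of direct products. One should also note that the argument is insensitive to which decomposition of $\omega$ is chosen, since the conclusion $\bZ^{\beta_1(\omega)}$ is manifestly determined by the word $\omega$ itself.
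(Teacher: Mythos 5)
Your proposal is correct and follows essentially the same route as the paper: induction on the number of symbols $1$ and $\bZ$ in $\omega$, with Theorem~\ref{com} handling the wreath-product case and the identity $(A\times B)/[A\times B,A\times B]\cong A/[A,A]\times B/[B,B]$ (which the paper proves via the kernel computation $\ker\varphi=[A,A]\times[B,B]=[A\times B,A\times B]$, i.e.\ exactly your elementary identity) handling the direct-product case. No substantive differences.
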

\begin{proof}
The proof is similar to Theorem \ref{mth1}. One should only use Theorem \ref{com} and the fact that for any two groups $A$ and $B$ it holds
\begin{equation}\label{dir}
 A\times B/[A\times B, A\times B]\simeq A/[A, A]\times B/[B, B]
\end{equation}
instead of Remark \ref{centers}.
To prove (\ref{dir}) it is enough to check that the map $\varphi\colon A\times B\to A/[A, A]\times B/[B, B]$ defined by 
$$ (a,b)\mapsto(a[A,A], b[B,B])$$
is a surjective homomorphism with the kernel $\ker \varphi=[A\times B, A\times B]$. 

Since $\varphi$ is a product of surjective homomorphisms $\varphi_1$, $\varphi_2$ defined by  
\begin{gather*}
\varphi_1\colon A\times B\to A/[A,A],\qquad\varphi_1(a,b)=a[A,A], \\
\varphi_2\colon A\times B\to B/[B,B],\qquad \varphi_2(a,b)= b[B,B],
 \end{gather*}
it is a surjective homomorphism as well. 

Further, notice that $$\ker\varphi=\{(a,b)|a\in[A,A],b\in[B,B]\}.$$

Let us check that $\ker\varphi\subset[A\times B, A\times B].$
Indeed, let $(\prod[a_i, b_i], \prod [c_j,d_j])\in\ker\varphi$, where $a_i, b_i\in A$, $c_j, d_j\in B$, then 
\begin{gather*}
(\prod_i[a_i, b_i], \prod_j [c_j,d_j])=(\prod_i[a_i, b_i], e)(e, \prod_j [c_j,d_j])=\prod_i([a_i, b_i], e)\prod_j(e,  [c_j,d_j])=\\
=\prod_i[(a_i,e),(b_i,e)]\prod_j[(e,c_j), (e,d_j)]
\in[A\times B, A\times B].
\end{gather*}
Conversely, for any commutator $[(a,b),(c,d)]$ in $[A\times B, A\times B]$ we have 
$$[(a,b),(c,d)]=(a,b)(c,d)(a^{-1},b^{-1})(c^{-1},d^{-1})=([a,c],[b,d])\in\ker\varphi.$$
Theorem is proved.
\comment{
It can easily be proven in the same way as Theorem \ref{mth1}, using the induction on $\beta_1 (\omega)$.
Instead of Remark \ref{centers} we have now Theorem \ref{com} and the fact that 
$$ G_1\times G_2/[G_1\times G_2, G_1\times G_2]\simeq G_1/[G_1, G_1]\times G_2/[G_2, G_2].$$
}
\end{proof}

Now we can get the evident proof of our main result, Theorem\ref{ZC}. Let us recall it.\\

	{\bf Theorem.\ref{ZC}}
	{\it Let $G\in\classGroups $, $\omega$ be an arbitrary presentation of $G$ in the alphabet $\alphabet $, and $\beta_1 (\omega)$ be the number of symbols $\bZ$ in the presentation $\omega$. 	
	 Then there are the following isomorphisms:
	$$Z(G) \cong G/ [G,G]\cong \bZ^{\beta_1(\omega)}.$$}
\begin{proof} 
Under the same assumptions, we get in Theorem \ref{mth1} that $Z(G)\simeq\bZ^{\beta_1(\omega)}$, and in Theorem \ref{mth2} that $G/[G,G]\simeq\bZ^{\beta_1(\omega)}$. Thus, evidently,
	$$Z(G) \cong G/ [G,G]\cong \bZ^{\beta_1(\omega)}.$$
\end{proof}



\comment
{\bibliographystyle{plain} 
\bibliography{a1}

\end {document}
}
\comment{

}

\bibliographystyle{amsalpha} 
\bibliography{a1}

\end{document}